\DeclareMathOperator{\Hom}{Hom}
\DeclareMathOperator{\Ext}{Ext}
\newtheorem{theorem}{Theorem}[section]
\newtheorem*{theorem*}{Theorem}
\newtheorem{corollary}[theorem]{Corollary}
\newtheorem{lemma}[theorem]{Lemma}
\newtheorem{proposition}[theorem]{Proposition}
\newtheorem*{definition}{Definition}
\title[Restriction of the oscillator representation to dual pairs of type I]{Projective cases for the restriction of the oscillator representation to dual pairs of type I}
\author{Sabine J. Lang}
\address{Department of Mathematics\\ University of Utah\\ Salt Lake City, UT 84112}
\email{lang@math.utah.edu}
\begin{document}

\begin{abstract}
For all the irreducible dual pairs of type I $(G,G')$, we analyze the restriction of the oscillator representation as a $(\mathfrak{g}', K')$-module, when $G'$ is the smaller group. For all $(G,G')$ in the stable range, as well as one more case, the modules obtained are projective. We use the duality correspondence introduced by Howe to analyze these restrictions.
\end{abstract}

\maketitle

\section{Introduction}

A classical problem in representation theory is the understanding of the restriction of a representation $\Pi$ of a group $G$ to one of its subgroups $H$. This work will focus on $(\mathfrak{g},K)$-modules, as defined by Harish-Chandra. In that setting, if $\Pi$ is a $(\mathfrak{g},K)$-module, it is useful to analyze $\Hom_{(\mathfrak{h},H\cap K)}(\Pi,\pi)$, where $\pi$ is an $(\mathfrak{h},H\cap K)$-module. For this purpose, one may use the derived functors: calculating $\Ext^n_{(\mathfrak{h},H\cap K)}(\Pi, \pi)$ is not necessarily easier than $\Hom_{(\mathfrak{h},H\cap K)}(\Pi,\pi)$, but their Euler characteristic might be. 
This difficult part becomes much simpler when the restriction of $\Pi$ is a projective ${(\mathfrak{h},H\cap K)}$-module. In this case, $\Ext^n_{(\mathfrak{h},H\cap K)}(\Pi, \pi)$ vanishes for every $n > 0$. It motivates this paper: the projectivity of a representation is an extremely powerful property. The link between Euler characteristic and projectivity is emphasized in \cite{savin}, among others.

We focus on dual pairs, an approach introduced in the framework of the duality correspondence for the oscillator representation. A dual pair is a pair $(G,G')$ of subgroups of a symplectic group $Sp(V)$, such that $G$ is the centralizer of $G'$ in $Sp(V)$, and vice-versa. This work focuses on dual pairs of type I and uses the Fock model of the oscillator representation, $\omega$. We prove:

\begin{theorem*}
Let $G'$ be the smaller member of a dual pair $(G,G')$ in a symplectic group $Sp(V)$. Then the restriction of the Fock model of the oscillator representation $\omega$ of $\widetilde{Sp}(V)$ to $G'$ is a projective $(\mathfrak{g}',K')$-module under the condition (*), as listed in theorem \ref{MainResult}. This condition includes the stable range but is slightly less restrictive.
\end{theorem*}

It might seem unusual to focus on only one representation of one group. Due to the importance of the oscillator representation, this is however not surprising. This representation appears as (Segal-Shale)-Weil representation (see \cite{segal}, \cite{shale}, \cite{weil}), harmonic or metaplectic representation, among many other names. The theory of duality correspondence (or Theta correspondence) describes the representations that appear in the decomposition of the oscillator representation after restriction to a dual pair, see \cite{howeTransc} or \cite{vergne} for more details. This is one of the major tool used in this work.

\subsection*{Acknowledgment}
I would like to thank my advisor Gordan Savin, without whom none of this work would be possible. I am also grateful to Peter Trapa for his useful comments and advice on this paper. This work was partially supported by the National Science Foundation under Grant DMS-1901745.

\section{Generalities}
Let $G$ be a Lie group with complexified Lie algebra $\mathfrak{g}$, and let $K$ be a maximal compact subgroup in $G$, or its two-fold cover (as needed). We denote by $\mathfrak{k}$ the complexified Lie algebra of $K$, and we choose a Cartan subalgebra $\mathfrak{t}$ of $\mathfrak{g}$ contained in $\mathfrak{k}$.

\subsection{Highest weight modules}

We have the Cartan decomposition $\mathfrak{g} = \mathfrak{k}+\mathfrak{p}$. When the Lie algebra $\mathfrak{k}$ has a non-trivial center but $\mathfrak{g}$ is simple, $\mathfrak{k}$ acts non-trivially on $\mathfrak{p}$. This action decomposes $\mathfrak{p}$ into two weight spaces, $\mathfrak{p}_+$ and $\mathfrak{p}_-$. We let $\Delta$ be the set of roots of $\mathfrak{g}$ with respect to $\mathfrak{t}$. Let $\mathfrak{b}_{\mathfrak{k}}$ is a Borel subalgebra for $\mathfrak{k}$ containing $\mathfrak{t}$. By choosing a Borel subalgebra defined by $\mathfrak{b}=\mathfrak{b}_{\mathfrak{k}}+\mathfrak{p}_+$, we guarantee that the roots of $\mathfrak{p}_+$ are contained in the positive roots. We write $\mathfrak{q}=\mathfrak{q}_+=\mathfrak{k} + \mathfrak{p}_+$ and $\mathfrak{q}_-=\mathfrak{k} + \mathfrak{p}_-$. By definition of $\mathfrak{p}_+$ and $\mathfrak{p}_-$ as weight spaces, $\mathfrak{q}_+$ and $\mathfrak{q}_-$ are subalgebras of $\mathfrak{g}$.

We denote the set of positive roots by $\Delta^+$, and write $\Delta_c$ for the compact roots, which are the roots coming from $\mathfrak{k}$. The set of non-compact roots is defined as $\Delta_n=\Delta-\Delta_c$. By intersecting $\Delta^+$,can we define the positive compact roots $\Delta_c^+$ and the positive non-compact roots $\Delta_n^+$.

Finally, we will write $\mathfrak{U}(\mathfrak{g})$ for the universal enveloping algebra of $\mathfrak{g}$. For a weight $\lambda$ of $\mathfrak{g}$, $F_{\lambda}$ is the irreducible $\mathfrak{k}$-module with highest weight $\lambda$, and $E_{\lambda}$ is the irreducible $\mathfrak{g}$-module with highest weight $\lambda$. We use $N(\lambda)$ to denote the generalized Verma module $\mathfrak{U}(\mathfrak{g})\otimes_{\mathfrak{U}(\mathfrak{q})}F_{\lambda}$, which is a $\mathfrak{U}(\mathfrak{g})$-module .

\subsection{Irreducibility criterion}

For any $\alpha \in \Delta$ and $\lambda \in \mathfrak{t}^*$, we write $(\lambda)_{\alpha}=\frac{2<\lambda,\alpha>}{<\alpha,\alpha>}$  The half sum of the positive roots is written $\rho$, and we use $s_{\alpha}$ for the reflection through the hyperplane determined by the root $\alpha$. The following result about the irreducibility of $N(\lambda)$ appears as Corollary 6.3 and Theorem 6.4 in \cite{enright}, and the first part is originally due to Jantzen.

\begin{proposition}\label{irreducibility criterion} 
Assume for any $\alpha \in \Delta_n^+$ with $(\lambda+\rho)_{\alpha} \in \mathbb{Z}_{>0}$, there is $\gamma \in \Delta_n$ with
$(\lambda+\rho)_{\gamma}=0$ and $s_{\alpha}(\gamma) \in \Delta_c$. Then 
$N(\lambda)=\mathfrak{U}(\mathfrak{g})\otimes_{\mathfrak{U}(\mathfrak{q})}F_{\lambda}$ is irreducible. Moreover, if $\mathfrak{g}$ is of type $A_n$, it is both a necessary
and sufficient condition.
\end{proposition}

\subsection{\texorpdfstring{$(\mathfrak{g},K)$}{(g,K)}-modules}

To stay in an algebraic setting, this work takes place in the category of $(\mathfrak{g},K)$-modules, defined below. It allows us to us $K$ to denote a maximal compact subgroup in $G$ or its two-fold cover, as this distinction does not affect $(\mathfrak{g},K)$-modules.

\begin{definition}
A $(\mathfrak{g},K)$-module is a complex vector space $V$ with an action of $\mathfrak{g}$ and an action of $K$ such that 
\begin{enumerate}
    \item for all $v\in V, k \in K, X \in \mathfrak{g}$, we have $k\cdot (X \cdot v)=(Ad(k)X)\cdot(k\cdot v)$, 
    \item $V$ is $K$-finite, i.e., for every $v\in V$, the space generated by $K\cdot v$ is a finite-dimensional vector space,
    \item for all $v \in V, Y \in \mathfrak{k}$, we have $(\dfrac{d}{dt}\exp(tY)\cdot v)\mid_{t=0}=Y\cdot v$.
\end{enumerate}
\end{definition}
We recall the Frobenius reciprocity, together with one important corollary. 

\begin{proposition}[Frobenius reciprocity] 
Let $A$, $B$ be two rings with $A \subset B$. Let $M$ be an $A$-module and $N$ be a $B$-module. We have a vector space isomorphism $\textup{Hom}_B(B\otimes_A M,N)\cong\textup{Hom}_A(M,N).$
\end{proposition}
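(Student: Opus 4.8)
The plan is to construct the isomorphism explicitly by exhibiting maps in both directions and checking that they are mutually inverse; this is the standard tensor--hom adjunction between extension and restriction of scalars, specialized with $N$ held fixed. First I would define the ``restriction'' map $\Phi \colon \Hom_B(B \otimes_A M, N) \to \Hom_A(M, N)$ by $\Phi(\phi)(m) = \phi(1 \otimes m)$. One checks that $\Phi(\phi)$ is $A$-linear: for $a \in A$ and $m \in M$, since $a \in A \subset B$ acts on $N$ through the $B$-module structure, we have $\Phi(\phi)(am) = \phi(1 \otimes am) = \phi(a \otimes m) = \phi\big(a \cdot (1 \otimes m)\big) = a \cdot \phi(1 \otimes m)$, and additivity is immediate. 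Additivity of $\Phi$ itself in the argument $\phi$ is clear.

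In the other direction I would define $\Psi \colon \Hom_A(M, N) \to \Hom_B(B \otimes_A M, N)$ by sending $\psi$ to the unique map determined by $\Psi(\psi)(b \otimes m) = b \cdot \psi(m)$. The step requiring care is seeing that this is well defined on the tensor product $B \otimes_A M$: the assignment $(b, m) \mapsto b\,\psi(m)$ from $B \times M$ to $N$ is biadditive and $A$-balanced, because $(ba)\psi(m) = b\big(a\psi(m)\big) = b\,\psi(am)$ using the $A$-linearity of $\psi$ together with the fact that $A$ acts on $N$ by restriction from $B$; hence it factors through $B \otimes_A M$. That $\Psi(\psi)$ is then $B$-linear follows since $\Psi(\psi)\big(b'(b \otimes m)\big) = \Psi(\psi)(b'b \otimes m) = b'b\,\psi(m) = b' \Psi(\psi)(b \otimes m)$, and linearity of $\Psi$ in $\psi$ is clear.

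Finally I would verify that $\Phi$ and $\Psi$ are mutually inverse. For $\phi \in \Hom_B(B \otimes_A M, N)$ and any elementary tensor, $(\Psi \circ \Phi)(\phi)(b \otimes m) = b \cdot \Phi(\phi)(m) = b \cdot \phi(1 \otimes m) = \phi(b \otimes m)$, and since elementary tensors generate $B \otimes_A M$ this gives $\Psi \circ \Phi = \mathrm{id}$. Conversely, for $\psi \in \Hom_A(M, N)$, $(\Phi \circ \Psi)(\psi)(m) = \Psi(\psi)(1 \otimes m) = 1 \cdot \psi(m) = \psi(m)$, so $\Phi \circ \Psi = \mathrm{id}$. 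Since both $\Phi$ and $\Psi$ are additive (and compatible with scalars when the Hom-spaces carry a common ground field), this establishes the claimed vector space isomorphism. I do not expect any genuine obstacle here beyond the well-definedness check on the tensor product; the real content of the statement is simply that extension of scalars $B \otimes_A (-)$ is left adjoint to restriction of scalars.
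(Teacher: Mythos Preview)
Your argument is correct and is the standard proof of the tensor--hom adjunction; there is no gap. The paper, for its part, does not supply a proof of this proposition at all: it simply states Frobenius reciprocity as a well-known fact and moves directly to the corollary about projectivity, so your explicit construction of $\Phi$ and $\Psi$ goes beyond what the paper records rather than differing from it.
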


\begin{corollary}\label{projective}
Let $Q$ be an $A$-module, and let $P=B \otimes_A Q.$ If $Q$ is a projective $A$-module, then $P$ is a projective $B$-module.
\end{corollary}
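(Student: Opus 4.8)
The plan is to verify the defining lifting property of a projective module for $P$, using the Frobenius reciprocity isomorphism twice and the hypothesis that $Q$ is projective over $A$. So suppose we are given $B$-modules $M$ and $N$, a surjection $f : N \twoheadrightarrow M$ of $B$-modules, and a $B$-module map $g : P \to M$; the goal is to produce $h : P \to N$ with $f \circ h = g$.

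First I would restrict scalars along $A \subset B$: the map $f$ is still a surjection when $M$ and $N$ are regarded as $A$-modules, since surjectivity only concerns the underlying sets. Since $P = B \otimes_A Q$, Frobenius reciprocity identifies $g \in \Hom_B(P, M)$ with a map $\bar g \in \Hom_A(Q, M)$. As $Q$ is a projective $A$-module, there is $\bar h \in \Hom_A(Q, N)$ with $f \circ \bar h = \bar g$. Applying Frobenius reciprocity in the other direction produces $h \in \Hom_B(P, N)$ corresponding to $\bar h$, and this $h$ will be the required lift.

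The only point that genuinely requires attention is the equality $f \circ h = g$, which amounts to naturality of the Frobenius isomorphism in the second variable. I would settle this by writing the isomorphism explicitly, namely $\Hom_B(B \otimes_A Q, N) \to \Hom_A(Q, N)$, $F \mapsto \big(q \mapsto F(1 \otimes q)\big)$, with inverse sending $\psi$ to the map $b \otimes q \mapsto b\,\psi(q)$; postcomposition with $f$ then manifestly commutes with these maps, so $f \circ h$ and $g$ have equal image under the injective Frobenius map and hence coincide. Equivalently, one can argue functorially: Frobenius reciprocity gives a natural isomorphism $\Hom_B(P,-) \cong \Hom_A\big(Q, \mathrm{Res}^B_A(-)\big)$ of functors, and since $\mathrm{Res}^B_A$ is exact and $\Hom_A(Q,-)$ is exact by projectivity of $Q$, the functor $\Hom_B(P,-)$ is exact, which is precisely projectivity of $P$. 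I do not expect any real obstacle here; the statement is standard and the proof is essentially this bookkeeping.
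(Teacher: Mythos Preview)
Your argument is correct and follows exactly the route the paper indicates: the corollary is stated immediately after Frobenius reciprocity with no further proof, so deducing the lifting property (or equivalently exactness of $\Hom_B(P,-)$) directly from the natural isomorphism $\Hom_B(B\otimes_A Q,-)\cong\Hom_A(Q,-)$ is precisely what is intended. There is nothing to add.
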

As a consequence, we get the following result for $(\mathfrak{g},K)$-modules:
\begin{proposition}
Let $V$ be a $(\mathfrak{k},K)$-module. Then $\mathfrak{U}(\mathfrak{g})\otimes_{\mathfrak{U}(\mathfrak{k})}V$ is a projective $(\mathfrak{g},K)$-module.
\end{proposition}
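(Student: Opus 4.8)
The plan is to obtain this exactly as Corollary~\ref{projective} is obtained from Frobenius reciprocity, but working inside the category of $(\mathfrak{g},K)$-modules and being careful about \emph{which} projectivity of $V$ is being used. As a $\mathfrak{U}(\mathfrak{k})$-module, $V$ is in general not projective at all (the trivial $K$-module, for instance, has projective dimension $\dim\mathfrak{k}$ over $\mathfrak{U}(\mathfrak{k})$), so one cannot simply invoke Corollary~\ref{projective} with $A=\mathfrak{U}(\mathfrak{k})$, $B=\mathfrak{U}(\mathfrak{g})$, $Q=V$. The point is instead that the category of $K$-finite $K$-modules is semisimple because $K$ is compact, so $V$ is automatically projective in the category of $(\mathfrak{k},K)$-modules, and induction from $(\mathfrak{k},K)$ to $(\mathfrak{g},K)$ is left adjoint to an exact functor.

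First I would check that $P:=\mathfrak{U}(\mathfrak{g})\otimes_{\mathfrak{U}(\mathfrak{k})}V$, with its evident left $\mathfrak{U}(\mathfrak{g})$-action, really is a $(\mathfrak{g},K)$-module. Equip it with the $K$-action $k\cdot(u\otimes v)=\mathrm{Ad}(k)(u)\otimes k\cdot v$, where $\mathrm{Ad}(k)$ also denotes the algebra automorphism of $\mathfrak{U}(\mathfrak{g})$ it induces. This is well defined on the relative tensor product: for $X\in\mathfrak{k}$ one has $\mathrm{Ad}(k)X\in\mathfrak{k}$, and axiom (1) for the $(\mathfrak{k},K)$-module $V$ gives $(\mathrm{Ad}(k)X)\cdot(k\cdot v)=k\cdot(X\cdot v)$, so $k\cdot(uX\otimes v)=k\cdot(u\otimes X\cdot v)$. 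Axiom (1) for $P$ is immediate, axiom (3) follows by differentiating $t\mapsto\exp(tY)\cdot(u\otimes v)$ and comparing with the restriction to $\mathfrak{k}$ of the left $\mathfrak{g}$-action, and for the $K$-finiteness axiom (2) I would fix an $\mathrm{Ad}(K)$-stable complement $\mathfrak{s}$ of $\mathfrak{k}$ in $\mathfrak{g}$ and use the Poincar\'e--Birkhoff--Witt isomorphism $P\cong S(\mathfrak{s})\otimes_{\mathbb{C}}V$ of $K$-modules: each $S^j(\mathfrak{s})\otimes V$ is a sum of finite-dimensional $K$-modules, hence so is $P$.

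Next I would record the $(\mathfrak{g},K)$-version of Frobenius reciprocity: for every $(\mathfrak{g},K)$-module $W$, restriction makes $W$ a $(\mathfrak{k},K)$-module $W|_K$, and
\[ \Hom_{(\mathfrak{g},K)}(P,W)\;\cong\;\Hom_{(\mathfrak{k},K)}(V,W|_K)\;=\;\Hom_K(V,W), \]
naturally in $W$: a $(\mathfrak{g},K)$-map $P\to W$ restricts to a $K$-map $V\to W$ on $1\otimes V$, and conversely a $K$-map $\varphi\colon V\to W$ — which is automatically a $\mathfrak{k}$-map by axiom (3) — extends to the $\mathfrak{g}$-equivariant map $u\otimes v\mapsto u\cdot\varphi(v)$, well defined and $K$-equivariant by axiom (1) for $W$; these assignments are mutually inverse. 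Now the restriction functor $W\mapsto W|_K$ to $K$-finite $K$-modules is exact, and $\Hom_K(V,-)$ is exact on $K$-finite $K$-modules because $K$ is compact, so such modules are completely reducible. Composing, $W\mapsto\Hom_{(\mathfrak{g},K)}(P,W)$ is exact, which is precisely the projectivity of $P$ in the category of $(\mathfrak{g},K)$-modules. Equivalently, to verify the lifting property of the definition directly: given $f\colon N\twoheadrightarrow M$ and $g\colon P\to M$ of $(\mathfrak{g},K)$-modules, the $K$-map $N|_K\to M|_K$ is still surjective, so the $K$-map $\bar g\colon V\to M$ corresponding to $g$ lifts through it to a $K$-map $\bar h\colon V\to N$ by complete reducibility, and transporting $\bar h$ back through the adjunction yields $h\colon P\to N$ with $f\circ h=g$.

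The only genuine work is the first step: checking that the $K$-action on the relative tensor product is well defined and satisfies all three axioms of a $(\mathfrak{g},K)$-module, with $K$-finiteness being the point that needs the $\mathrm{Ad}(K)$-stable Poincar\'e--Birkhoff--Witt decomposition. Everything afterward is formal, resting on the two exactness facts — exactness of restriction and exactness of $\Hom_K(V,-)$ for $K$ compact — in the same way that Corollary~\ref{projective} rests on Frobenius reciprocity together with projectivity of $Q$.
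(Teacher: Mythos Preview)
Your proof is correct and follows essentially the same route as the paper's: every $(\mathfrak{k},K)$-module is projective in that category by $K$-finiteness and compactness of $K$, and then Frobenius reciprocity (the analogue of Corollary~\ref{projective}) in the $(\mathfrak{g},K)$ setting gives the result. Your version is simply more careful than the paper's two-line argument, in particular making explicit that the relevant projectivity of $V$ is in the category of $(\mathfrak{k},K)$-modules rather than over $\mathfrak{U}(\mathfrak{k})$, and verifying that the induced module is genuinely a $(\mathfrak{g},K)$-module.
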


\begin{proof}
By $K$-finiteness, every $(\mathfrak{k},K)$-module is projective as a $(\mathfrak{k},K)$-module. Now the result is a direct application of corollary \ref{projective} restricted to $(\mathfrak{g},K)$-modules.
\end{proof}

\subsection{Oscillator representation}

We are interested in a particular representation $\widetilde{\omega}$ of the metaplectic group $\widetilde{Sp}(2N, \mathbb{R})$, a double cover of the symplectic group. This representation, called oscillator representation, was first introduced in the $1960$s by Segal and Shale, in \cite{segal} and \cite{shale}, followed by the work of Weil in \cite{weil}. Several constructions and different models for the oscillator representation appear in \cite{li} and \cite{adams}.

For a subgroup $G$ of $Sp(2N, \mathbb{R})$, we denote by $\widetilde{G}$ its preimage in $\widetilde{Sp}(2N, \mathbb{R})$. We are only interested in algebraic $\widetilde{G}$-modules, hence we consider the category of $(\mathfrak{g},K)$-modules, for $K$ a maximal compact subgroup of $G$, or its two-fold cover. Therefore, we work with the Fock model of the oscillator representation, a realization of $\widetilde{\omega}$ as an $(\mathfrak{sp}(2N,\mathbb{C}),\widetilde{U}(N))$-module. We still call it the oscillator representation but denote it by $\omega$.

Restricting $\omega$ to $K$ gives a description of the $K$-types of the oscillator representation. This is done by using the duality correspondence detailed in section \ref{DualityCorrespondence} for the pair $\big(\widetilde{U}(1), \widetilde{U}(N)\big)$ using proposition \ref{TheraCorrU}. We obtain a list of $K$-types indexed by the weights $(k + \frac{1}{2},\dots,\frac{1}{2})$ of $\widetilde{U}(N)$, where $k = 0, 1, 2, \dots$. When $k$ is even, the $K$-types form one of the irreducible summand of $\omega$, as the $K$-types with $k$ odd form the other. Hence, $\omega$ is not irreducible but consists of two summands.

Since most of this work is done on the Lie algebra level, double covers do not play an important role. It is therefore enough to analyze subgroups $G,G'$ in a symplectic group, and it is not necessary to focus on their preimage $\widetilde{G},\widetilde{G}'$ in $\widetilde{Sp}(2N, \mathbb{R})$.

\subsection{Reductive dual pairs}

To decompose the oscillator representation restricted to a subgroup, we use dual pairs, following Howe's approach.

\begin{definition}
A pair $(G,G')$ of subgroups in a symplectic group $Sp(2N,\mathbb{R})$ is a reductive dual pair if
\begin{enumerate}
\item $G$ and $G'$ act reductively on $\mathbb{R}^{2N}$,
\item $G$ and $G'$ are centralizers of each other inside $Sp(2N,\mathbb{R})$.
\end{enumerate}
Moreover, if $G$ is compact, we say that $(G,G')$ is a compact dual pair.
\end{definition}

We assume that $G'$ is the smaller member of the pair so that the duality correspondence holds. We also consider two dual pairs with a particular relation, as introduced by Kudla in \cite{kudla}:

\begin{definition}
Two dual pairs $(G,G')$ and $(H,H')$ form a seesaw dual pair if we have the inclusions $H\subset G$ and $G'\subset H'$. We 
denote it by $\Big((G,G'),(H,H')\Big)$.
\end{definition}

Irreducible dual pairs, i.e., pairs that cannot be decomposed as a direct sum of two dual pairs, are classified in two types. Following \cite{howeTransc}, each pair correspond to either a division algebra (type II) or a division algebra with an involution (type I). We focus on pairs of type I, which come in four different types:
\begin{enumerate}
  \item $\left(O(p,q),Sp(2n,\mathbb{R})\right)$ , corresponding to $\mathbb{R}$ with the identity map,
  \item $\left(O(p,\mathbb{C}),Sp(2n,\mathbb{C})\right)$, corresponding to $\mathbb{C}$ with the identity map,
  \item\label{sym} $\left(U(r,s),U(p,q)\right)$, corresponding to $\mathbb{C}$ with the conjugation map,
  \item $\left(Sp(p,q),O^*(2n)\right)$, corresponding to $\mathbb{H}$ with the conjugation map.
\end{enumerate}
This corresponds to seven different cases, depending which group of the pair is the smallest (except for (\ref{sym}), which is symmetric).

\subsection{Duality correspondence}\label{DualityCorrespondence}

For a compact dual pair $(G,G')$, we decompose the oscillator representation $\omega$ of $Sp(2N,\mathbb{R})$ under the action of $G$. We obtain \[\omega=\bigoplus_{\sigma}(\Hom_G(\sigma,\omega)\otimes \sigma),\] summing over all the irreducible representations $\sigma$ of $G$. Indeed, if $T \in \Hom_G(\sigma, \omega)$ and $v \in \sigma$, then $T(v)\in\omega$ and we have a map $\Hom_G(\sigma,\omega)\times \sigma \to \omega$, $(T,v)\mapsto T(v).$ This map is extended to $\Hom_G(\sigma,\omega)\otimes\sigma \to \omega,$ which is injective when $\sigma$ is irreducible. Since $G$ is compact, $\omega$ is completely reducible, and $\omega=\bigoplus_{\sigma}(\Hom_G(\sigma,\omega)\otimes \sigma)$.

The duality correspondence gives an explicit description of $\theta(\sigma)$. By compactness of $G$, $\theta(\sigma)$ is a highest weight module, and we denote its highest weight by $\tau$. We write $E_{\tau}$ for the irreducible $\mathfrak{g}'$-module with highest weight $\tau$. Note that $\tau$ is also a dominant weight for $\mathfrak{k}'$, so $\tau$ is also the highest weight of a finite dimensional representation of $\mathfrak{k}'$. We use $F_{\tau}$ for the irreducible $\mathfrak{k}'$-module with highest weight $\tau$. We list now the duality correspondence for the pairs of type I. The correspondence, originally due to Kashiwara and Vergne \cite{vergne}, can be found with more details in \cite{adams}. 

\subsubsection{\texorpdfstring{$(O(n,\mathbb{R}),Sp(2p,\mathbb{R}))$}{(O(n,R),SP(2p,R))}}
Since $O(n,\mathbb{R})$ is a disconnected group, we use the embedding $O(n,\mathbb{R})=U(n)\cap GL(n,\mathbb{R})$. Given a highest weight $\lambda$ of $U(n)$ and a parameter $\epsilon=\pm1$, the representation of $O(n,\mathbb{R})$ with highest weight $(\lambda;\epsilon)$ is the irreducible summand of the representation of $U(n)$ with highest weight $\lambda$ containing the highest weight vector, tensored with the $sgn$ representation of $O(n,\mathbb{R})$ if $\epsilon=-1$.

\begin{proposition}\label{ThetaCorrO}
The duality correspondence for the pair $(O(n,\mathbb{R}),Sp(2p,\mathbb{R}))$ is given by the map $\sigma \mapsto \tau'$ described below: 
\[\left(a_1,\dots,a_k,0,\dots,0;\epsilon\right) \mapsto \left(a_1+\frac{n}{2},\dots,a_k+\frac{n}{2},\overbrace{\frac{n}{2}+1,\dots,\frac{n}{2}+1}^{\frac{1-\epsilon}{2}(n-2k)},
\frac{n}{2},\dots,\frac{n}{2}\right),\]
where $\sigma$ defines an irreducible highest weight $O(n,\mathbb{R})$-module and $\tau'$ defines an 
irreducible lowest weight $\mathfrak{sp}(2p,\mathbb{C})$-module. All such weights occur, with the constraints $k \leq [\dfrac{n}{2}]$,
and $k+\dfrac{1-\epsilon}{2}(n-2k) \leq p$.
\end{proposition}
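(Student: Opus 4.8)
The plan is to work in the Fock (polynomial) model of $\omega$ and combine it with classical invariant theory for the pair $\big(O(n,\mathbb{C}),GL(p,\mathbb{C})\big)$. First I would realize $\omega$, as a $\big(\mathfrak{sp}(2pn,\mathbb{C}),\widetilde{U(pn)}\big)$-module, on the polynomial ring $\mathbb{C}[z_{ij}]$ with $1\le i\le n$ and $1\le j\le p$, using the identification $\mathbb{R}^{2pn}=\mathbb{R}^{2p}\otimes\mathbb{R}^{n}$ that exhibits the dual pair $\big(O(n,\mathbb{R}),Sp(2p,\mathbb{R})\big)$. In this model $O(n,\mathbb{R})$ acts on the index $i$ through the standard representation on $\mathbb{R}^n$ (extending to a holomorphic action of $O(n,\mathbb{C})$), the maximal compact $U(p)\subset Sp(2p,\mathbb{R})$ acts on the index $j$, the subspace $\mathfrak{p}^{+}\subset\mathfrak{sp}(2p,\mathbb{C})$ acts by multiplication by the $O(n)$-invariant quadratics $r_{jk}=\sum_i z_{ij}z_{ik}$, and $\mathfrak{p}^{-}$ by the dual second-order operators $\Delta_{jk}=\sum_i\partial_{ij}\partial_{ik}$. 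At the same time I would record that the metaplectic ($\det^{1/2}$) twist in $\omega$, restricted along the diagonal embedding $U(p)\hookrightarrow U(pn)$, contributes the character $\det_p^{\,n/2}$; this is the source of the shift by $(\tfrac n2,\dots,\tfrac n2)$ appearing throughout the statement.

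Next, $\theta(\sigma)=\Hom_{O(n,\mathbb{R})}(\sigma,\omega)$ carries the $\mathfrak{sp}(2p,\mathbb{C})$-action described in Section \ref{DualityCorrespondence}, with $\mathfrak{p}^{+}$ raising and $\mathfrak{p}^{-}$ lowering the polynomial degree. The structural input is separation of variables for $O(n)$: by the first fundamental theorem, $\mathbb{C}[z_{ij}]^{O(n)}$ is generated by the $r_{jk}$, so $\mathbb{C}[z_{ij}]=\mathfrak{U}(\mathfrak{p}^{+})\cdot\mathcal H$ where $\mathcal H=\bigcap_{j,k}\ker\Delta_{jk}=\ker\mathfrak{p}^{-}$ is the space of joint harmonics. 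Consequently $\theta(\sigma)=\mathfrak{U}(\mathfrak{p}^{+})\cdot\Hom_{O(n)}(\sigma,\mathcal H)$ is generated over $\mathfrak{U}(\mathfrak{p}^{+})$ by its bottom $U(p)$-type $\Hom_{O(n)}(\sigma,\mathcal H)$, which for degree reasons is annihilated by $\mathfrak{p}^{-}$; hence $\theta(\sigma)$ is the lowest weight $\mathfrak{sp}(2p,\mathbb{C})$-module generated by that $U(p)$-type, and what is left is to identify the type. For this I would invoke $\big(O(n,\mathbb{C}),GL(p,\mathbb{C})\big)$ Howe duality on the harmonics, $\mathcal H\cong\bigoplus_\lambda V^{O(n)}_\lambda\otimes W^{GL(p)}_{\mu(\lambda)}$; irreducibility of each $\theta(\sigma)$ is part of this duality (alternatively it follows from Proposition \ref{irreducibility criterion}, since these are unitarizable lowest weight modules), so the whole content reduces to pinning down which $\lambda$ occur and the partner $\mu(\lambda)$.

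The main obstacle is exactly this combinatorial matching outside the stable range, i.e. the dependence on the sign $\epsilon$. Write $\sigma$ as $(a_1,\dots,a_k,0,\dots,0;\epsilon)$ with $a_k>0$ and $k\le[\tfrac n2]$, so that $V^{O(n)}_\lambda$ with $\lambda=(a_1,\dots,a_k)$ is the highest-weight summand in standard form. For $\epsilon=+1$ the harmonic piece carrying $V_\lambda^{O(n)}$ is the classical one, with $\mu(\lambda)=\lambda$ padded by zeros, yielding $\tau'=(a_1+\tfrac n2,\dots,a_k+\tfrac n2,\tfrac n2,\dots,\tfrac n2)$. For $\epsilon=-1$ one must use that tensoring by $\det_{O(n)}$ replaces $V_\lambda^{O(n)}$ by $V_{\lambda^{\sharp}}^{O(n)}$, where $\lambda^{\sharp}$ is the associate of $\lambda$ obtained by changing the length of the first column of the Young diagram from $k$ to $n-k$; a short computation gives $\lambda^{\sharp}=(a_1,\dots,a_k,\underbrace{1,\dots,1}_{n-2k})$, so $\mu(\lambda^{\sharp})$ has $n-k$ parts and $\tau'$ acquires precisely $\tfrac{1-\epsilon}{2}(n-2k)=n-2k$ entries equal to $\tfrac n2+1$, as claimed; the edge point $n$ even with $k=\tfrac n2$, where $n-2k=0$ and $V_\lambda^{O(n)}\cong V_\lambda^{O(n)}\otimes\det$, is consistent. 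Finally the stated constraints are forced: $k\le[\tfrac n2]$ is needed for $\lambda$ to index an $O(n)$-representation in standard form, and $k+\tfrac{1-\epsilon}{2}(n-2k)\le p$ is exactly the requirement that the relevant $GL(p)$-partition have at most $p$ parts, i.e. that $W^{GL(p)}_{\mu}$ — and with it $\theta(\sigma)$ — be nonzero. A cleaner alternative, and the route taken in \cite{adams}, is to observe that $\big(U(n),U(p)\big)$ and $\big(O(n,\mathbb{R}),Sp(2p,\mathbb{R})\big)$ form a seesaw pair inside $Sp(2pn,\mathbb{R})$ and to deduce the correspondence from the $\big(U(p),U(n)\big)$ case by running the branching $U(n)\downarrow O(n)$; the same Littlewood-type combinatorics (harmonics and associate partitions) reappears there.
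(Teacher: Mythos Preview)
The paper does not give its own proof of this proposition: it is stated without argument and attributed to Kashiwara--Vergne \cite{vergne} and the exposition in \cite{adams}. Your sketch is essentially the original Kashiwara--Vergne argument --- realize $\omega$ in the Fock model, identify $\mathfrak{p}^{-}$ with the $O(n)$-invariant Laplacians so that the lowest $K'$-type of $\theta(\sigma)$ lands in the joint harmonics $\mathcal H$, and then read off the partner from the $\big(O(n,\mathbb C),GL(p,\mathbb C)\big)$ duality on $\mathcal H$ --- together with the correct bookkeeping for $\epsilon=-1$ via the associate partition $\lambda^{\sharp}=(a_1,\dots,a_k,1^{\,n-2k})$. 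The seesaw alternative you mention at the end is precisely the route taken in \cite{adams}. So you have supplied more than the paper does here; there is nothing in the paper to compare against. One small caveat: your parenthetical appeal to Proposition~\ref{irreducibility criterion} for the irreducibility of $\theta(\sigma)$ is misplaced, since that criterion concerns $N(\tau)$ and only applies in a restricted range of parameters; the irreducibility you need is part of Howe duality for compact pairs and holds without any such restriction.
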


\subsubsection{\texorpdfstring{$(U(p),U(m,n))$}{(U(p),U(m,n))}}
The duality correspondence for $(U(p),U(m,n))$ is expressed with $U(p)$-modules and $\mathfrak{gl}(m+n,\mathbb{C})$-modules:

\begin{proposition}\label{TheraCorrU}
The duality correspondence for the pair $(U(p),U(m,n))$ is given by the map $\sigma \mapsto \tau'$ described below:
\[\left(a_1+\frac{m-n}{2},\dots,a_k+\frac{m-n}{2},\frac{m-n}{2},\dots,\frac{m-n}{2},b_1+\frac{m-n}{2},\dots,b_l+\frac{m-n}{2}\right)\]
\[\mapsto\]
\[\left(a_1+\frac{p}{2},\dots,a_k+\frac{p}{2},\frac{p}{2},\dots,\frac{p}{2}\right)\oplus\left(-\frac{p}{2},\dots,-\frac{p}{2},b_1-
\frac{p}{2},\dots,b_l-\frac{p}{2}\right),\]
where $\sigma$ defines an irreducible highest weight $U(p)$-module and $\tau'$ defines an irreducible lowest weight 
$\mathfrak{gl}(m+n,\mathbb{C})$-module. All such 
weights occur, with the constraints $k+l \leq p$, $k\leq m$, $l \leq n$.
\end{proposition}

\subsubsection{\texorpdfstring{$(Sp(p),O^*(2n))$}{(Sp(p),O*(2n))}}
We recall that $Sp(p)$ can be seen either as the unitary quaternionic group, or as the intersection of $Sp(2p, \mathbb{C})$ and $U(2p)$. Its complexified Lie algebra is given by $\mathfrak{sp}(2p,\mathbb{C})$. The group $O^*(2n)=SO^*(2n)$ is the quaternionic orthogonal group. Its complexified Lie algebra is $\mathfrak{o}(2n,\mathbb{C})$.

\begin{proposition}\label{TheraCorrOstar}
The duality correspondence for the pair $(Sp(p),O^*(2n))$ is given by the map $\sigma \mapsto \tau'$ described below:
\[\left(a_1,\dots,a_k,0,\dots,0\right) \mapsto \left(a_1+p,\dots,a_k+p,p,\dots,p\right),\]
where $\sigma$ defines an irreducible highest weight $Sp(p)$-module and $\tau'$ defines an irreducible lowest weight 
$\mathfrak{o}(2n,\mathbb{C})$-module. All such weights occur, with the constraints $k \leq p$, $k\leq n$.
\end{proposition}

\section{Set-up and method}\label{method}
This section defines the notations, for $G$, $G'$ subgroups of a large symplectic group: 
\begin{itemize}
    \item $G,G'$ are real Lie groups, with complexified Lie algebras $\mathfrak{g}$, $\mathfrak{g}'$, forming a dual pair $(G,G')$ with $G'$ the smaller member
    \item $K,K'$ are maximal compact subgroups of $G, G'$ (or their two-fold covers), respectively, with complexified Lie algebras $\mathfrak{k}$, $\mathfrak{k}'$, and Cartan decomposition $\mathfrak{g}'=\mathfrak{k}'+\mathfrak{r}'$ for $\mathfrak{g}'$,
    \item $M'$ is the centralizer of $K$ so that $\big((K,M'),(G,G')\big)$ is a seesaw dual pair, with complexified Lie algebra $\mathfrak{m}'$ 
    \item $J'$ is a maximal compact subgroup of $M'$ (or its two-fold cover) with complexified Lie algebra $\mathfrak{j}'$, and Cartan decomposition $\mathfrak{m}'=\mathfrak{j}'+\mathfrak{p}'=\mathfrak{j}'+\mathfrak{p}'_++\mathfrak{p}'_-$,
    \item $\mathfrak{t'}$ is a Cartan subalgebra of both $\mathfrak{m}'$ and $\mathfrak{j}'$,
    \item $\mathfrak{q}'=\mathfrak{q}'_+=\mathfrak{j}'+\mathfrak{p}'_+$ and $\mathfrak{q}'_-=\mathfrak{j}'+\mathfrak{p}'_-$ are two parabolic subalgebras of $\mathfrak{m}'$.
\end{itemize}

To understand the restriction of $\omega$ to $G'$, we encounter two different cases. 
\begin{enumerate}
  \item $G'\subsetneq M'$ and $\mathfrak{r}'\cong\mathfrak{p}'_+$: we let $K$ act to get a decomposition $\displaystyle \omega=\oplus_{\sigma}(\sigma \otimes E_{\tau}),$ for $\sigma$ an irreducible representation of $K$ and $E_{\tau}$ an irreducible representation of $M'$ with highest weight $\tau$. We compute a condition (*) so that $N(\tau)$ is irreducible, which forces $E_{\tau}=N(\tau)=\mathfrak{U}(\mathfrak{m}')\otimes_{\mathfrak{U}(\mathfrak{q}')}F_{\tau}$, a projective $(\mathfrak{m}',J')$-module. The restriction from $M'$ to $G'$ is computed using $\mathfrak{r}'\cong\mathfrak{p}'_+$ to get \[\omega\cong\bigoplus_{\sigma}\left(\sigma \otimes \left(\mathfrak{U}(\mathfrak{g}')\otimes_{\mathfrak{U}(\mathfrak{k}')}(F_{\tau}\mid_{\mathfrak{k}'})\right)\right),\] where each summand is a projective $(\mathfrak{g}',K')$-module, under the condition (*).
  \item $(K,M')=(K_1,G')\oplus(K_2,G')$ with $K_1$ and $K_2$ of the same type: First, the action of $K_1 \times K_2$ decomposes $\omega=\omega_1\otimes \omega_2^*$, with $\omega_1$ a highest weight module for $K_1$, $\omega_2$ a lowest weight module for $K_2$. Each piece is decomposed as above. We compute conditions (*) and (**) so that $\omega_1$ and $\omega_2^*$, respectively, are projective $(\mathfrak{g}',K')$-modules. The tensor product is computed, so that when both conditions (*) and (**) are met, we have
  \[\omega=\omega_1\otimes \omega_2^*=\bigoplus_{\sigma, \widetilde{\sigma}}\left((\sigma\otimes\widetilde{\sigma})\otimes\left(\mathfrak{U}(\mathfrak{g}')\otimes_{\mathfrak{U}(\mathfrak{k}')}(F_{\tau}\otimes F_{\widetilde{\tau }})\right)\right),\] and each summand is a projective $(\mathfrak{g}',K')$-module. 
\end{enumerate}

Section \ref{irred} explores conditions so that $N(\tau)$ is irreducible, for each compact dual pair. Section \ref{identification} analyzes the restriction from $M'$ to $G'$ in the first case, and the tensor product in the second case. Finally, the results are summarized in section \ref{conclusion}. This work follows a strategy from Howe presented in \cite{howe}, using seesaw pairs to reduce the problem to unitary highest weight modules. In that work, Howe gives a similar result (theorem 5.2, \cite{howe}) but from an $L^2$ perspective.

\section{Irreducibility of \texorpdfstring{$N(\tau)$}{N(tau)}}\label{irred}

For each compact dual pair, we give a condition on the respective sizes of the groups so that the generalized Verma module $N(\tau)$ is irreducible. The stable range case is already known, see \cite{zhu}, for example, but our results show that this irreducibility holds in one more case. We also show that our bound cannot be extended in a general case, by giving counter-examples. 

\subsection{Dual pair \texorpdfstring{$(K,M')=(O(n,\mathbb{R}),Sp(2p,\mathbb{R}))$}{(K,M')=(O(n,R),Sp(2p,R))}} 

The group $M'=Sp(2p,\mathbb{R})$ has a maximal compact subgroup $J'=U(p)$. We have a correspondence between the highest weight $\sigma$ for $O(n,\mathbb{R})$ and the lowest weight $\tau'$ for $\mathfrak{sp}(2p,\mathbb{C})$,  in proposition \ref{ThetaCorrO}. We conjugate by the longest element of the Weyl group, switching positive and negative roots. Instead of the lowest weight $\tau'$ we work with the highest weight 
\[\tau=\left(-\dfrac{n}{2},\dots,-\dfrac{n}{2},\overbrace{-\dfrac{n}{2}-1,\dots,-\dfrac{n}{2}-1}^{\frac{1-\epsilon}{2}(n-2k)},-a_k-
\dfrac{n}{2},\dots,-a_1-\dfrac{n}{2}\right).\] Since we start with a highest weight $\sigma$ for $O(n,\mathbb{R})$, we have $a_1\geq \dots \geq a_k \geq 0$. The root system occurring here is given by:
\begin{itemize}
    \item $\Delta^+=\{e_i-e_j \mid 1\leq i < j\leq p\} \cup \{e_i+e_j \mid 1\leq i<j\leq p\}\cup\{2e_i \mid 1\leq i\leq p\}$
    \item $\Delta_n^+=\{e_i+e_j \mid 1\leq i<j\leq p\}\cup\{2e_i \mid 1\leq i\leq p\}$
    \item $\displaystyle\rho=(p,\dots,\overbrace{p+1-i}^{i\text{-th coordinate}},\dots,1).$
\end{itemize}

\subsubsection{Case $\epsilon=1$}

The products between $\tau+\rho$ and a non-compact positive root are:
\[(\tau+\rho)_{2e_i}= \begin{cases} 
p+1-i-\frac{n}{2} & \text{if } 1\leq i \leq p-k \\ 
p+1-i-\frac{n}{2}-a_{p+1-i} & \text{if } p-k < i \leq p 
\end{cases},\] 
\[(\tau+\rho)_{e_i+e_j}=\begin{cases}
2p+2-i-j-n & \text{if }1\leq i,j \leq p-k \\
2p+2-i-j-n-a_{p+1-j} & \text{if } 1\leq i \leq p-k < j \leq p \\
2p+2-i-j-n-a_{p+1-i}-a_{p+1-j} & \text{if } p-k < i,j \leq p
\end{cases}.\] 

If we take $n\geq 2p$, all these products are non-positive, and by proposition \ref{irreducibility criterion} $N(\tau)$ is irreducible. For $n=2p-1$, we see that $p+1-i-\frac{n}{2}=\frac{3}{2}-i$ is not an integer. All the other products are non-positive, so the criterion applies, and $N(\tau)$ is irreducible.

\subsubsection{Case $\epsilon=-1$}

The products of $\tau+\rho$ with non-compact positive roots are:

\[(\tau+\rho)_{2e_i}=\begin{cases}
p+1-i-\frac{n}{2} & \text{if } 1\leq i \leq p+k-n \\
p-i-\frac{n}{2} & \text{if } p+k-n < i \leq p-k \\
p+1-i-\frac{n}{2}-a_{p+1-i} & \text{if } p-k < i \leq p
\end{cases},\] 

\[(\tau+\rho)_{e_i+e_j}=\begin{cases}
2p+2-i-j-n & \text{if } 1\leq i,j \leq p+k-n \\
2p-i-j-n & \text{if } p+k-n < i,j \leq p-k \\
2p+2-i-j-n-a_{p+1-i}-a_{p+1-j} & \text{if } p-k < i,j \leq p \\
2p+1-i-j-n & \text{if } 1\leq i \leq p+k-n \\
& \text{and } p+k-n < j \leq p-k \\
2p+1-i-j-n-a_{p+1-j} & \text{if } p+k-n < i \leq p-k \\
& \text{and } p-k < j \leq p \\
2p+2-i-j-n-a_{p+1-j} & \text{if } 1\leq i \leq p+k-n\\
& \text{and } p-k < j \leq p
\end{cases}.\] 

For $n\geq 2p$, all these products are non-positive, hence $N(\tau)$ is irreducible. For $n=2p-1$, we have $p+1-i-\frac{n}{2}=\frac{3}{2}-i$ which is not an integer, and all the other products are non positive. We proved:

\begin{lemma}
If $n \geq 2p-1$, then the $(\mathfrak{sp}(2p,\mathbb{C}),\widetilde{U}(p))$-module $N(\tau)$ is irreducible for all the weights $\tau$ appearing in the restriction $\omega|_{\mathfrak{sp}(2p,\mathbb{C})}$.
\end{lemma}

If $n=2p-2$, we use theorem 6.2 from \cite{joseph}. Starting from $\sigma=(\overbrace{2,\dots,2}^{p-1},\overbrace{0,\dots,0}^{p-1};1)$, we get the highest weight $\tau=(-p+1,-p-1,\dots,-p-1)$ that we write as $\tau=(2,0,\dots,0)+(-p-1,\dots,-p-1)=(2,0,\dots,0)+(-p-1)\omega_{\alpha}$ following \cite{joseph}. For the family $N\left(u\omega_{\alpha}+(2,0,\dots,0)\right)$, the first reduction point of the family is given by $u=-p-1$. So $N(\tau)$ is reducible.

\subsection{Dual pair \texorpdfstring{$(K,M')=(U(p),U(m,n))$}{(K,M')=(U(p),U(m,n))}} 
We use the correspondence given in proposition \ref{TheraCorrU}. Conjugation by the longest element of the Weyl group of $M'$ sends $U(m,n)$ to $U(n,m)$ and switches positive and negative roots. To avoid confusion in the notation, we still denote our group by $M'$ after conjugation. We assume that $a_i$ and $b_j$ can be equal to zero, and we write the highest weight $\tau$ as 
\[\tau=\left(b_1-\dfrac{p}{2},\dots,b_n-\dfrac{p}{2}\right)\oplus\left(a_{n+1}+\dfrac{p}{2},\dots,a_{n+m}+\dfrac{p}{2}\right)\] with $b_n\leq \dots \leq b_1 \leq 0$ and $0 \leq a_{n+m} \leq \dots \leq a_{n+1}$.

We apply the irreducibility criterion given by proposition \ref{irreducibility criterion}. Our group $M'=U(n,m)$ contains a maximal compact subgroup $J'=U(n)\times U(m)$. The root system is of type $A_n$, hence this criterion is both necessary and sufficient for the irreducibility of $N(\tau)$. We have: 
\begin{itemize}
    \item $\Delta^+=\{e_i-e_j \mid 1\leq i<j\leq n+m\}$
    \item $\Delta_n^+=\{e_i-e_j \mid 1\leq i \leq n < j \leq n+m\}$
    \item $\rho=(\dfrac{m+n-1}{2},\dots,\overbrace{\dfrac{m+n-2i+1}{2}}^{i\text{-th coordinate}},\dots,\dfrac{-m-n+1}{2}).$
\end{itemize}

We obtain $(\tau+\rho)_{e_i-e_j}=b_i-a_j+j-i-p$ with $1\leq i \leq n < j \leq n+m$. Since $b_i-a_j\leq0$ for all $i,j$, we conclude that if $p \geq m+n-1$, then $(\tau+\rho)_{e_i-e_j}$ is non-positive for all 
$i,j$, and $N(\tau)$ is irreducible. We deduce:
\begin{lemma}
If $p \geq m+n-1$, then the $(\mathfrak{gl}(m+n,\mathbb{C}),\widetilde{U}(m)\times \widetilde{U}(n))$-module $N(\tau)$ is irreducible for all the weights $\tau$ appearing in the restriction $\omega|_{\mathfrak{gl_{m+n}(\mathbb{C})}}$.
\end{lemma}

If $p=m+n-2$, with $m,n\geq 2$, and \[\sigma=(\overbrace{1+\frac{m-n}{2},\dots,1+\frac{m-n}{2}}^{n-1},\overbrace{-1+\frac{m-n}{2},\dots,-1+\frac{m-n}{2}}^{m-1})\] we find the corresponding highest weight
\[\tau=\left(-\dfrac{p}{2},-1-\dfrac{p}{2},\dots-1-\dfrac{p}{2}\right)\oplus\left(1+\dfrac{p}{2},\dots,1+\dfrac{p}{2},\dfrac{p}{2}\right).\]
The products $(\tau+\rho)_{e_i-e_j}$ are strictly negative, except for $(\tau+\rho)_{e_1-e_{n+m}}=1$. But there is no root $\gamma$ such that $(\tau+\rho)_{\gamma}=0$. Since proposition \ref{irreducibility criterion} becomes a necessary condition for type $A_n$, this $N(\tau)$ is reducible.

\subsection{Dual pair \texorpdfstring{$(K,M')=(Sp(p),O^*(2n))$}{(K,M')=(Sp(p),O*(2n))}} 
Proposition \ref{TheraCorrOstar} gives the a correspondence between the highest weight $\sigma$ for $Sp(p)$ and the lowest weight $\tau'$ for $\mathfrak{o}(2n,\mathbb{C})$. As before, we conjugate by the longest Weyl group element, switch positive and negative roots, and obtain a highest weight $\tau$. Letting $a_i=0$ if necessary, we get 
\[\tau=\left(-p-a_n,\dots,-p-a_{n-i+1},\dots,-p-a_1\right)\] with $a_1\geq \dots \geq a_n \geq 0$.

A maximal compact subgroup of $M'$ is $J'=U(n)$. The complexified Lie algebra of $M'$ is of type $D_n$, therefore the root system of $M'$ is given by: 
\begin{itemize}
    \item $\Delta^+=\{e_i-e_j \mid 1\leq i<j\leq n\}\cup \{e_i+e_j \mid 1\leq i<j\leq n\}$
    \item $\Delta_n^+=\{e_i+e_j \mid 1\leq i < j \leq n\}$
    \item $\rho=(n-1,\dots,\overbrace{n-i}^{i\text{-th coordinate}},\dots,0).$
\end{itemize}

To apply proposition \ref{irreducibility criterion}, we calculate $(\tau+\rho)_{e_i+e_j}=2n-2p-i-j-a_{n-i+1}-a_{n-j+1}$ with $1\leq i <j \leq n$. For all $i,j$, we know that $-a_{n-i+1}-a_{n-j+1}\leq0$. So we conclude that if $p \geq n-\frac{3}{2}$, then $(\tau+\rho)_{e_i-e_j}$ is non-positive for all 
$i,j$, and $N(\tau)$ is irreducible. Since we only consider integral values of $n$ and $p$, we rewrite the bound as $p \geq n-1$. We proved:
\begin{lemma}
If $p \geq n-1$, then the $(\mathfrak{o}(2n,\mathbb{C}),\widetilde{U}(n))$-module $N(\tau)$ is irreducible for all the weights $\tau$ appearing in the restriction $\omega|_{\mathfrak{o}(2n,\mathbb{C})}$.
\end{lemma}

When $p=n-2$, we use theorem 6.2 from \cite{joseph} again to show that some modules $N(\tau)$ appearing in the restriction of $\omega$ are reducible. Choosing $\sigma=(1,\dots,1,0)$ gives a highest weight $\tau=(-p,-p-1,\dots,-p-1)=(-n+2,-n+1,\dots,-n+1)$, which is written as $(-n+1,\dots,-n+1)+(1,0,\dots,0)=(-n+1)\omega_{\alpha}+(1,0,\dots,0)$ following the notations from \cite{joseph}. The first reduction point of the family $N\left(u\omega_{\alpha}+(1,0,\dots,0)\right)$ is at $u=-n+1$, hence $N(\tau)$ with $\tau$ given above is reducible.

\section{Modules identifications}\label{identification}

The results presented in this section are known, see \cite{loke} when $(G,G')$ is in the table range, for example. For readability and consistency of notations, we still include our approach in this paper.

\subsection{Restriction from \texorpdfstring{$M'$}{M'} to \texorpdfstring{$G'$}{G'}} 

We start from $M'$, with complexified Lie algebra $\mathfrak{m}'$ and maximal compact subgroup $J'$. We recall the Cartan decomposition $\mathfrak{m}'=\mathfrak{j}'+\mathfrak{p}'$, with $\mathfrak{p}'=\mathfrak{p}'_++\mathfrak{p}'_-$, and we write $\mathfrak{q}'$ for $\mathfrak{j}'+\mathfrak{p}'_+$. The group $G'$ is a subgroup of $M'$, with complexified Lie algebra $\mathfrak{g}'$. We have a maximal compact subgroup $K'$ of $G'$, and the Cartan decomposition $\mathfrak{g}'=\mathfrak{k}'+\mathfrak{r}'$. 

We consider a finite-dimensional $\mathfrak{j}'$-module $E$, so $E$ is a $(\mathfrak{j}',J')$-module. By letting $\mathfrak{p}'_+$ act trivially, $E$ becomes a 
$\mathfrak{q}'$-module and we form $W=\mathfrak{U}(\mathfrak{m}')\otimes_{\mathfrak{U}(\mathfrak{q}')}E$, which is a $(\mathfrak{m}',J')$-module. We analyze the restriction of $W$ as a $(\mathfrak{g}',K')$-module. As vector spaces, we have $W\cong S(\mathfrak{p}'_+)\otimes E$, with $S(\mathfrak{p}'_+)$ the symmetric algebra on $\mathfrak{p}'_+$.
From $E$, we also create a $(\mathfrak{g}',K')$-module: by restriction, we see $E$ as a $\mathfrak{k}'$-module, $E\mid_{\mathfrak{k}'}$, and form
the tensor product $V=\mathfrak{U}(\mathfrak{g}')\otimes_{\mathfrak{U}(\mathfrak{k}')}(E\mid_{\mathfrak{k}'})$. Similarly, there is an isomorphism of vector spaces $V\cong 
S(\mathfrak{r}')\otimes (E\mid_{\mathfrak{k}'})$.

We define two filtrations, $V=\oplus_nV_n/V_{n-1}$ and $W=\oplus_nW_n/W_{n-1}$, by \[V_n=\sum_{r\leq
n}S(\mathfrak{r}')[r]\mathfrak{U}(\mathfrak{k}')\otimes_{\mathfrak{U}(\mathfrak{k}')}(E\mid_{\mathfrak{k}'}),\]and \[W_n=\sum_{r\leq n}S(\mathfrak{p}'_+)[r]\mathfrak{U}(\mathfrak{j}')\otimes_{\mathfrak{U}(\mathfrak{q}')}E.\] By Frobenius reciprocity, we have a map $T:V\to W, 1\otimes e \mapsto 1\otimes e$ for any $e\in E$. 
Writing $\{x_1,\dots,x_r\}$ for a basis of $\mathfrak{r}'$, $\{y_1,\dots,y_r\}$ for a basis of $\mathfrak{p}'_+$ and 
$\{z_1,\dots,z_r\}$ for a basis of $\mathfrak{p}'_-$ such that $x_i=y_i+z_i$ in $\mathfrak{p}'$, we extend the map $T$ linearly so that 
\[T(x_1\dots x_n \otimes e)=(y_1+z_1)\dots(y_n+z_n)\otimes e\] for any $e \in E$. The map $T:V\to W$ now preserves the filtrations, which proves:

\begin{lemma}\label{restrictionM'G'}
The map $T:V \to W$ is an isomorphism of $\mathfrak{U}(\mathfrak{g}')$-modules, induced by an isomorphism of $S(\mathfrak{r}')$-modules on the graded spaces $T_G:Gr(V)\to Gr(W)$, through the identification $\mathfrak{r}'\cong\mathfrak{p}'_+$.
\end{lemma}
This implies that
\[\Big(\mathfrak{U}(\mathfrak{m}')\otimes_{\mathfrak{U}(\mathfrak{q}')}E\Big)\mid_{\mathfrak{g}'}\cong\mathfrak{U}(\mathfrak{g}')\otimes_{\mathfrak{U}(\mathfrak{k}')}(E\mid_{\mathfrak{k}'}).\]

This is applied to $E=F_{\tau}$ in the cases where $\mathfrak{r}'$ and $\mathfrak{p}'_+$ are isomorphic.

\subsection{Tensor product for \texorpdfstring{$(K,M')=(K_1,G')\oplus(K_2,G')$}{(K,M')=(K1,G')+(K2,G')}}
We start from the Cartan decomposition $\mathfrak{g}'=\mathfrak{k}'+\mathfrak{r}'$. We decompose $\mathfrak{r}'$ further as $\mathfrak{r}'_++\mathfrak{r}'_-$ and note that $\mathfrak{r}'_+$ and $\mathfrak{r}'_-$ are commutative Lie algebras. We define $\mathfrak{q}'_+=\mathfrak{k}'+\mathfrak{r}'_+$ and $\mathfrak{q}'_-=\mathfrak{k}'+\mathfrak{r}'_-$.

We consider two finite-dimensional $\mathfrak{k}'$-modules $E$ and $F$; these are $(\mathfrak{k}',K')$-modules. We let $\mathfrak{r}'_+$ act on $E$ by zero, so $E$ 
becomes a $\mathfrak{q}'_+$-module. Similarly, we let $\mathfrak{r}'_-$ act on $F$ by zero and obtain a $\mathfrak{q}'_-$-module. We define
$V_E=\mathfrak{U}(\mathfrak{g}')\otimes_{\mathfrak{U}(\mathfrak{q}'_+)}E$ and $V_F=\mathfrak{U}(\mathfrak{g}')\otimes_{\mathfrak{U}(\mathfrak{q}'_-)}F,$
which are $(\mathfrak{g}',K')$-modules. We also define $V=\mathfrak{U}(\mathfrak{g}')\otimes_{\mathfrak{U}(\mathfrak{k}')}(E\otimes F)$.

By Poincaré-Birkhoff-Witt theorem, there exists a grading on both $\mathfrak{U}(\mathfrak{r}'_+)$ and $\mathfrak{U}(\mathfrak{r}'_-)$. Since $\mathfrak{r}'_+$ and $\mathfrak{r}'_-$ are commutative, $\mathfrak{U}(\mathfrak{r}'_+)=S(\mathfrak{r}'_+)$ and $\mathfrak{U}(\mathfrak{r}'_-)=S(\mathfrak{r}'_-)$. We identify the piece $\mathfrak{U}(\mathfrak{r}'_+)[n]$ of degree $n$ with the space $S(\mathfrak{r}'_+)[n]$ of homogeneous polynomials of degree $n$ (same for $\mathfrak{r}'_-$). We write $M_n$ for the subspace of elements of degree less or equal to $n$ in 
$\mathfrak{U}(\mathfrak{r}')$: \[M_n=\sum_{r+s\leq n}(S(\mathfrak{r}'_-)[r]\otimes S(\mathfrak{r}'_+)[s])\cong\oplus_{i\leq n}S(\mathfrak{r}')[i].\] From this, we define a filtration $V=\oplus_nV_n/V_{n-1}$ where 
$V_n=M_n\mathfrak{U}(\mathfrak{k}')\otimes_{\mathfrak{U}(\mathfrak{k}')}(E\otimes F)$. Note that $V_0=E\otimes F$. By the description of $M_n$ as $\oplus_{i\leq 
n}S(\mathfrak{p}')[i]$, the quotient $M_n/M_{n-1}$ is identified with $S(\mathfrak{r}')[n]$ and we get
$V_n/V_{n+1}=S(\mathfrak{r}')[n]\otimes_{\mathfrak{U}(\mathfrak{k}')} (E\otimes F).$
We define a similar filtration on $V_E\otimes V_F$: \[(V_E\otimes V_F)_n=\sum_{r+s\leq 
n}\left(\left(M_r\mathfrak{U}(\mathfrak{k})\otimes_{\mathfrak{U}(\mathfrak{q}'_+)}E\right)\otimes \left(M_s\mathfrak{U}(\mathfrak{k})\otimes_{\mathfrak{U}(\mathfrak{q}'_-)}F\right)\right).\] As vector spaces, this is equivalent to \[(V_E\otimes V_F)_n=
\sum_{r+s\leq n} \Big(S(\mathfrak{r}'_-)[r]\otimes_{\mathfrak{U}(\mathfrak{q}'_+)}E \Big)\otimes\Big(S(\mathfrak{r}'_+)[s]\otimes_{\mathfrak{U}(\mathfrak{q}'_-)}F\Big).\] Hence we have
\[(V_E\otimes V_F)_n/(V_E\otimes V_F)_{n-1}=\sum_{r+s= n} \Big(S(\mathfrak{r}'_-)[r]\otimes_{\mathfrak{U}(\mathfrak{q}'_+)}E 
\Big)\otimes\Big(S(\mathfrak{r}'_+)[s]\otimes_{\mathfrak{U}(\mathfrak{q}'_-)}F\Big).\]

Since $E\otimes F=V_0$ is naturally a subset of $V$, we use Frobenius reciprocity to extend this inclusion to a map 
\[T:V \to V_E\otimes V_F, 1\otimes (e\otimes f) \mapsto (1\otimes e)\otimes(1\otimes f),\] for all $e \in 
E$ and $f \in F$. We extend this map so that it is is compatible with the module structure. By using basis of $\mathfrak{r}'_+$, $\mathfrak{r}'_-$, it is a simple computation to show that $T$ preserves the filtrations.

\begin{lemma}\label{tensor}
The map $T:V \to V_E\otimes V_F$, induced by an isomorphism of $S(\mathfrak{r}')$-modules on 
the graded spaces $T_G:Gr(V)\to Gr(V_E\otimes V_F)$, is an isomorphism of $\mathfrak{U}(\mathfrak{g}')$-modules.
\end{lemma}

\begin{proof}
We know that $T(V_n)\subset (V_E\otimes V_F)_n$. Using computations with the action of basis elements of $\mathfrak{r}'_+$ and $\mathfrak{r}'_-$ and by tracking the degrees, we show that $T_G:Gr(V)\to Gr(V_E\otimes V_F)$ is surjective. Finally, as $\mathbb{C}$-vector spaces, we have 
\[\dim(V_n)=\dim(E)\dim(F)\Big(\sum_{r+s\leq n}\dim(S(\mathfrak{p}'_-)[r])\dim(S(\mathfrak{p}'_+)[s])\Big)=\dim((V_E\otimes V_F)_n),\] which concludes the proof.
\end{proof}

Hence, we proved that $V_E\otimes V_F\cong V$, i.e., \[\left(\mathfrak{U}(\mathfrak{g}')\otimes_{\mathfrak{U}(\mathfrak{q}'_+)}E\right) \otimes \left(\mathfrak{U}(\mathfrak{g}')\otimes_{\mathfrak{U}(\mathfrak{q}'_-)}F\right) \cong \mathfrak{U}(\mathfrak{g}')\otimes_{\mathfrak{U}(\mathfrak{k}')}(E\otimes F).\] 

\section{Conclusion}\label{conclusion}

\begin{theorem}\label{MainResult}
Let $G'$ be the smaller member of a dual pair $(G,G')$ in a symplectic group $Sp(V)$. Then the restriction of the Fock model of the oscillator representation $\omega$ of $\widetilde{Sp}(V)$ to $G'$ is a projective $(\mathfrak{g}',K')$-module under the condition (*) listed in the table below:
\begin{center}
\begin{tabular}{cccc}
&$(G,G')$ & $(K,M')$ &  (*)\\
\hline
\hline
(i) & $\left(Sp(2n,\mathbb{R}),O(p,q)\right)$ & $\left(U(n),U(p,q)\right)$ & $n \geq p+q-1$ \\
\hline
(ii) & $\left(O(p,q),Sp(2n,\mathbb{R})\right)$ & $\left(O(p),Sp(2n,\mathbb{R})\right)$ & $p,q \geq 2n-1$ \\
& & $\oplus \left(O(q),Sp(2n,\mathbb{R})\right)$ & \\
\hline
(iii) & $\left(O^*(2n),Sp(p,q)\right)$ & $\left(U(2n),U(2p,2q)\right)$ & $n \geq 2(p+q)-1$ \\
\hline
(iv) & $\left(Sp(p,q),O^*(2n)\right)$ & $\left(Sp(p),O^*(2n)\right)$ & $p,q \geq n-1$ \\
& & $\oplus \left(Sp(q),O^*(2n)\right)$ & \\
\hline
(v) & $\left(Sp(2n,\mathbb{C}),O(p,\mathbb{C})\right)$ & $\left(Sp(n),O^*(2p)\right)$ & $n \geq p-1$ \\
\hline
(vi) & $\left(O(p,\mathbb{C}),Sp(2n,\mathbb{C})\right)$ & $\left(O(p),Sp(4n,\mathbb{R})\right)$ & $p \geq 4n-1$ \\
\hline
(vii) & $\left(U(r,s),U(p,q)\right)$ & $\left(U(r),U(p,q)\right)$ & $r,s\geq p+q-1$ \\
& & $ \oplus \left(U(s),U(p,q)\right)$ & \\
\end{tabular}
\end{center}
\end{theorem}

\begin{proof}
For cases (i), (iii), (v) and (vi), the action of $K$ decomposes $\omega$ as $\displaystyle \omega=\oplus_{\sigma}(\sigma \otimes E_{\tau}).$ Section \ref{irred} shows that (*) is a necessary condition for the equality $N(\tau)=E_{\tau}$. Finally, the restriction from $M'$ to $G'$ is computed in \ref{restrictionM'G'}, which requires $\mathfrak{p}'_+ \cong \mathfrak{r}'$. This is verified in the table below. For complex groups like $Sp(2n,\mathbb{C})$ or $O(p,\mathbb{C})$, we consider their Lie algebras as real manifolds, before complexifying them.

\begin{center}
\begin{tabular}{c|ccc| ccc}
& $\mathfrak{m}'$ & $\mathfrak{j}'$ & $\mathfrak{p}'_+$ & $ \mathfrak{g}'$ &  $\mathfrak{k}'$ & $\mathfrak{r}'$ \\
\hline
\hline
(i) & $\mathfrak{gl}(p+q,\mathbb{C})$ & $\mathfrak{gl}(p,\mathbb{C})$ & $M_{p,q}(\mathbb{C})$ &  $\mathfrak{o}(p,q,\mathbb{C})$ & $\mathfrak{o}(p,\mathbb{C})$ & $M_{p,q}(\mathbb{C})$ \\
& & $\oplus \mathfrak{gl}(q,\mathbb{C})$ & & & $\oplus \mathfrak{o}(q,\mathbb{C})$ & \\
\hline
(iii) & $\mathfrak{gl}(2p+2q,\mathbb{C})$ & $\mathfrak{gl}(2p,\mathbb{C})$ &  $M_{2p,2q}(\mathbb{C})$ & $\mathfrak{sp}(p,q)$ & $\mathfrak{sp}(p)$ & $M_{2p,2q}(\mathbb{C})$ \\
& & $\oplus \mathfrak{gl}(2q,\mathbb{C})$ & & & $\oplus \mathfrak{sp}(q)$ & \\
\hline
(v) & $\mathfrak{o}(2p,\mathbb{C})$ & $\mathfrak{gl}(p,\mathbb{C})$ & $\mathfrak{o}(p,\mathbb{C})$ & $\mathfrak{o}(p,\mathbb{C})$ & $\mathfrak{o}(p,\mathbb{C})$ & $\mathfrak{o}(p,\mathbb{C})$ \\
& & & & $\oplus \mathfrak{o}(p,\mathbb{C})$ & & \\
(vi) & $\mathfrak{sp}(4n,\mathbb{C})$ & $\mathfrak{gl}(2n,\mathbb{C})$ & $\mathfrak{sp}(2n,\mathbb{C})$ & $\mathfrak{sp}(2n,\mathbb{C})$ & $\mathfrak{sp}(2n,\mathbb{C})$ & $\mathfrak{sp}(2n,\mathbb{C})$ \\ 
& & & & $\oplus\mathfrak{sp}(2n,\mathbb{C})$ & & \\
\end{tabular}
\end{center}

For cases (ii), (iv) and (vii), $\omega$ is decomposed under the action of $K_1 \times K_2$ as $\omega=\omega_1\otimes \omega_2^*$. The condition (*) is computed for each case to get $N(\tau)=E_{\tau}$ in section \ref{irred}. The two pieces $\omega_1$ and $\omega_2^*$ are put back together through the tensor product described in \ref{tensor}. To apply this result, we need to verify that $\mathfrak{r}'$ can be decomposed into the weight spaces $\mathfrak{r}'_+$ and $\mathfrak{r}'_-$. This corresponds to checking that $\mathfrak{k}'$ has a non-trivial center, and is done explicitly for each case:
\begin{itemize}
\item[(ii)] $G'=Sp(2n,\mathbb{R}), \mathfrak{g}'=\mathfrak{sp}(2n,\mathbb{C}), \mathfrak{k}'=\mathfrak{gl}(n,\mathbb{C})$,
\item[(iv)] $G'=O^*(2n), \mathfrak{g}'=\mathfrak{o}(2n,\mathbb{C}), \mathfrak{k}'=\mathfrak{gl}(n,\mathbb{C})$,
\item[(vii)] $G'=U(p,q), \mathfrak{g}'=\mathfrak{gl}(p+q,\mathbb{C}), \mathfrak{k}'=\mathfrak{gl}(p,\mathbb{C})\oplus \mathfrak{gl}(q,\mathbb{C})$,
\end{itemize}
which concludes the proof as $\mathfrak{gl}(n,\mathbb{C})$ has non-trivial center.
\end{proof}

\medskip

\bibliographystyle{acm}
\bibliography{oscillator}
\end{document}